\newcommand{\R}{\mbox{$\mathbb{R}$}}
\newcommand{\arr}{\mbox{$\Longrightarrow$}}
\newcommand{\rar}{\mbox{$\longrightarrow$}}
\newcommand{\vspa}{\vspace{.5cm}} 
\newcommand{\spa}{\mbox{$\hspace{1em}$}} 
\newcommand{\spaa}{\mbox{$\hspace{2em}$}} 
\newcommand{\spaaa}{\mbox{$\hspace{3em}$}}
\newcommand{\al}{\alpha}
\newtheorem{theo}{Theorem}[section]
\newtheorem{defi}[theo]{Definition}
\newtheorem{rem}[theo]{Remark}
\newtheorem{ex}[theo]{Example}
 \newenvironment{MYitemize}
{
\begin{itemize}}
{\end{itemize}}
\newcommand{\tabfrac}[2]{%
        \setlength{\fboxrule}{0pt}%
        \fbox{$\dfrac{#1}{#2}$}%
}
\title{A Characterization  of a Subclass of Separate Ratio-Type Copulas}
\date{\today}
\author{ 
\thanks{Liwa College, Abu Dhabi, UAE. Email: ziad.adwan@lc.ac.ae} Ziad Adwan
\thanks{New York University, Abu Dhabi, UAE.
Email: ns6159@nyu.edu} Nicola Sottocornola
}
\begin{document}

\selectlanguage{english}

\maketitle

\begin{abstract}
Copulas are essential tools in statistics and probability theory, enabling the study of the dependence structure between random variables independently of their marginal distributions. Among the various types of copulas, Ratio-Type Copulas have gained significant attention due to their flexibility in modeling joint distributions. This paper focuses on Separate Ratio-Type Copulas, where the dependence function is a separate product of univariate functions.

We revisit a theorem characterizing the validity of these copulas under certain assumptions, generalize it to broader settings, and examine the conditions for reversing the theorem in the case of concave generating functions. To address its limitations, we propose new assumptions that ensure the validity of separate copulas under specific conditions. These results refine the theoretical framework for separate copulas, extending their applicability to pure mathematics and applied fields such as finance, risk management, and machine learning.
\end{abstract}

{\bf Keywords}: Bivariate Copulas, Ratio-Type Copulas

{\bf 2000 Mathematics Subject Classification}: 60E05, 62H05, 62H20.


\section{Introduction}	                                    


Copulas are powerful tools in statistics and science for modeling and analyzing complex relationships between random variables. Unlike traditional methods that focus on linear correlations, copulas comprehensively capture dependencies, including tail dependence and asymmetries. This makes them particularly valuable in fields where understanding intricate interdependencies is critical \cite{N}.

One of the primary advantages of copulas is their ability to separate the marginal distributions of random variables from their dependence structure. This flexibility enables researchers to model the behavior of individual variables using appropriate distributions while independently specifying their dependence \cite{S}. For instance, in finance, copulas are used to analyze the joint risk of assets, allowing for better portfolio optimization and risk management \cite{EMS}.

In environmental science, copulas help model the interplay between variables like rainfall and temperature, enabling more accurate predictions of extreme weather events \cite{GF}. Similarly, in medicine, they are used to study the correlation between biomarkers and disease outcomes, advancing personalized treatment plans \cite{J}.

Beyond applied sciences, copulas play a key role in machine learning, reliability analysis, and econometrics. By capturing the essence of dependence structures, copulas provide a versatile framework for addressing real-world problems characterized by uncertainty and interdependence, making them indispensable in modern statistical analysis.
\vspa

Formally, a bivariate copula \(C(u, v)\) is a function that maps the unit square \(S = [0, 1]^2\) to \(I = [0, 1]\), satisfying the following conditions:
\begin{enumerate}
    \item \(C(u, 0) = C(0, v) = 0\) for all \((u, v) \in S\),
    \item \(C(u, 1) = u\) and \(C(1, v) = v\) for all \(u, v \in I\),
    \item For any \((u_1, v_1), (u_2, v_2) \in S\) such that \(u_1 \leq u_2\) and \(v_1 \leq v_2\),
    \begin{equation}\label{eq_C}
    C(u_2, v_2) - C(u_2, v_1) - C(u_1, v_2) + C(u_1, v_1) \geq 0.
    \end{equation}
\end{enumerate}

These properties ensure that a copula captures the dependency structure of a joint distribution while being independent of the marginal distributions of the random variables.

One particularly flexible family of copulas is the \textit{Ratio-Type Copulas}, which are defined as:
$$
B_\theta(u,v) = \frac{uv}{1 - \theta \phi(u,v)}\,, \spaa 0 \leq u,\, v \leq 1 \, ,\,  \theta \in \R
$$
where $\phi$ is a real function defined on $S$. In recent years they attracted considerable attention (\cite{C1}, \cite{C2}, \cite{C3}, \cite{KBM}, \cite{MS}). To simplify the analysis, researchers have focused on \textit{Separate Ratio-Type Copulas}, a subclass where the dependence function \(\phi(u, v)\) is expressed as a separable product of two univariate functions  that we assume differentiable a.e.:
\begin{equation} \label{eq_D}
D_\theta(u,v) =  \frac{uv}{1 - \theta f(u) g(v)}\,, \spaa 0 \leq u,\, v \leq 1 \, ,\,  \theta \in \R.
\end{equation}

Now let's consider the function $G$ defined on $S$ in this way:
\begin{equation}\label{eq_G}
G= (f-uf')(g-vg') - 2 uv f' g'
\end{equation}
and define 
\begin{equation}\label{eq_alpha}
\alpha_1 = \underset{S}{\min} (G) \spaaa
\alpha_2 = \underset{S}{\max} (G)\,.
\end{equation}

In a nice paper published in 2024 \cite{KBM}, El Ktaibi, Bentoumi and Mesfioui examine the conditions under which  \eqref{eq_D} is a valid copula. Given the assumptions:

\begin{itemize}

\item[A1.]  $f(1) = g(1) = 0$.

\item[A2.] $f$ and $g$ are strictly monotonic functions.

\item[A3.] $\dfrac{f(u) g(v)}{f(0) g(0)} \leq 1 - uv$.

\end{itemize}
they proved that $D_\theta$ is a valid copula provided that $1/\alpha_1 \leq \theta \leq 1/\alpha_2$.

The aim of this paper is:

\begin{MYitemize}

\item To prove that under the Assumptions A1, A2, A3  the theorem cannot be reversed.

\item To provide additional Assumptions to make it possible.

\end{MYitemize}


\section{Bivariate copulas}	                                    


Before moving forward, we recall the general definition of a bivariate copula $C$, assuming that all the derivatives involved exist a.e. (\cite{N}, \cite{KBMS}):

\begin{defi}
The function $C\ :\ S\ \rar\ I$ is a bivariate copula if:
\begin{enumerate}

\item $C(u,0) = C(0,v) = 0,\  C(u,1)=u,\  C(1,v)=v,\ \forall \, u, v \in I$

\item $\dfrac{\partial^2 C}{\partial u \partial v} \geq 0, \ \forall \,  (u,v) \in S$

\end{enumerate}
\end{defi}

Here inequality \eqref{eq_C} has been replaced with the more comfortable condition 2. on the second derivative. In the case of copulas of the form \eqref{eq_D} the first condition is trivially verified. The second one reduces to (see \cite{KBM}):
$$
\frac{1 - \theta \left[ (f-uf')(g-vg') -2D_\theta f' g' \right] }{(1- \theta fg)^2  } \geq 0
$$
or, which is the same, to
\begin{equation}\label{ineq}
1 - \theta \left[ (f-uf')(g-vg') -2D_\theta f' g' \right]  \geq 0
\end{equation}


\section{The theorem}	                                    


We start this section with a couple of simple observations:

\begin{rem}\label{rem2}
We can assume, without loss of generality, that $f(0) = g(0) =1$.
\end{rem}
\begin{proof}
It's enough to remark that, if  \eqref{eq_D} is a valid copula, so is  $\tilde{D} = uv / (1 - \tilde{\theta} \tilde{f} \tilde{g})$ with $\tilde{f} = f/f(0),\ \tilde{g} = g/g(0)$ and $\tilde{\theta} = f(0) g(0) \theta$. 
\end{proof}

\begin{rem}
If $M = \max(f-uf')$ and $N = \max(g-vg')$ then $\underset{\partial S}{\max} (G) = \max \{M,N \}$.
\end{rem}
\begin{proof}
Let's find the maximum of $G$ on $\partial S$:
\begin{eqnarray*}
v=0 \spa &\arr& \spa G(u,0) = f(u)-uf'(u)  \spa \arr \spa \max(G(u,0)) = M\\
v=1 \spa &\arr& \spa G(u,1) = (f(u)-uf'(u))(- g'(1)) - 2g'(1) u f'(u) = -g'(1) (f(u)+uf'(u))\\
\spa &\arr& \spa  \max(G(u,1) = -g'(1)
\end{eqnarray*}
Analogously
\begin{eqnarray*}
u=0 \spa &\arr& \spa \max(G(0,v)) = N\\
u=1 \spa &\arr& \spa G(1,v) = -f'(1)
\end{eqnarray*}

Remembering that $-f'(1) = (f-uf')\big|_{\substack{u=1}} \leq M$ and $ - g'(1) = (g-vg')\big|_{\substack{v=1}} \leq N$, we  conclude that
\begin{equation}\label{eq_M1M2}
\underset{\partial S}{ \max } (G) = \max \{ M, N \}\, .
\end{equation}

\end{proof}

Conditions A1,\dots,A3 can be simplified in light of Remark \ref{rem2}:

\begin{itemize}

\item[A1.]  $f(0)=g(0)=1\, , \ f(1) = g(1) = 0$.

\item[A2.] $f$ and $g$ are strictly decreasing functions.

\item[A3.] $fg \leq 1-uv\,.$

\end{itemize}

 So finally the starting point of our investigation is:

\begin{theo}\label{th1}
Let $f$ and $g$ verify A1, A2, A3. 
Then
\begin{center}
$D_\theta$ is a valid copula\spaa $\Longleftarrow  \spaa \theta \in \left[ \dfrac{1}{\al_1}, \dfrac{1}{\al_2} \right]$ 
\end{center}
\end{theo}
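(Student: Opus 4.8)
The plan is to reduce the copula condition to inequality \eqref{ineq} and then bound the offending term uniformly over $S$. First I would recall that, by the discussion in Section 2, $D_\theta$ is a valid copula precisely when
\[
1 - \theta\,\bigl[(f-uf')(g-vg') - 2 D_\theta f' g'\bigr] \geq 0 \qquad \text{on } S .
\]
The term in brackets is \emph{not} quite $G$ as defined in \eqref{eq_G}: it has $2 D_\theta f' g'$ in place of $2 uv f' g'$. So the first key step is to compare $2 D_\theta f' g'$ with $2 uv f' g'$. Since $D_\theta = uv/(1-\theta f g)$, these differ by the factor $1/(1-\theta fg)$; I would use Assumptions A1 and A2 — $f,g$ strictly decreasing from $1$ to $0$, hence $0 \le f,g \le 1$ and $fg \ge 0$, together with $f' , g' \le 0$ so $f' g' \ge 0$ — to control the sign, and Assumption A3 ($fg \le 1-uv \le 1$) to keep $1-\theta fg$ bounded away from $0$ for $\theta$ in the stated range. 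Concretely, for $\theta \ge 0$ we have $0 < 1-\theta f g \le 1$, so $D_\theta \ge uv$ and $2 D_\theta f' g' \ge 2 uv f' g'$; for $\theta \le 0$, $1 - \theta fg \ge 1$, so $D_\theta \le uv$ and the inequality reverses. This lets me sandwich the bracketed expression between $G$ and something comparable to $G$.

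The second step is to turn the inequality \eqref{ineq} into the condition $1 - \theta G \ge 0$ on $S$, which by the definition \eqref{eq_alpha} of $\alpha_1 = \min_S G$ and $\alpha_2 = \max_S G$ is exactly equivalent to $\theta \in [1/\alpha_1, 1/\alpha_2]$ — provided one checks the signs of $\alpha_1, \alpha_2$ (one expects $\alpha_1 < 0 < \alpha_2$, so that $1/\alpha_1 < 0 < 1/\alpha_2$ and the interval is a genuine neighbourhood of $0$; this follows by evaluating $G$ on the boundary, e.g. $G(u,0) = f - uf'$ which is positive, and $G(1,v) = -f'(1) \le 0$, cf. the preceding Remark). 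So the heart of the matter is: show that $1 - \theta\bigl[(f-uf')(g-vg') - 2 D_\theta f' g'\bigr] \ge 0$ whenever $1 - \theta G \ge 0$.

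The main obstacle is handling the two sign regimes for $\theta$ simultaneously and making sure the replacement of $D_\theta$ by $uv$ works \emph{in the right direction} in each case. When $\theta \ge 0$: since $f' g' \ge 0$ and $D_\theta \ge uv$, we get $-2\theta D_\theta f' g' \le -2\theta uv f' g'$, hence $\theta\bigl[(f-uf')(g-vg') - 2 D_\theta f' g'\bigr] \le \theta G$, and $1 - \theta G \ge 0$ gives what we want. When $\theta \le 0$: now $-\theta \ge 0$ and $D_\theta \le uv$, so again $-2\theta D_\theta f' g' \le -2\theta uv f' g'$... wait — here one must be careful, because $(f-uf')(g-vg')$ can itself change sign, and multiplying by a negative $\theta$ flips things; I would split $G = (f-uf')(g-vg') - 2uv f'g'$ and argue termwise, or better, observe that in \eqref{ineq} we may replace $D_\theta$ by its two-sided bound and use that $\theta\bigl[\cdots - 2D_\theta f'g'\bigr] \le \max\{\theta G,\ 0\}$ uniformly. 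The cleanest route is probably: note $2 D_\theta f' g' \ge 0$ always (product of nonnegatives), so $-\theta\cdot 2 D_\theta f' g'$ has the sign of $-\theta$; combine with the bound on $(f-uf')(g-vg')$ coming from A1--A3. I would then finish by verifying that equality can be achieved, so the interval $[1/\alpha_1, 1/\alpha_2]$ is exactly the range for which the argument closes, which is consistent with this being only a sufficient condition ($\Longleftarrow$) in the statement.

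\medskip

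(Remark: since Theorem \ref{th1} is stated as an implication that already appears in \cite{KBM}, the cleanest exposition may simply cite that reference for the forward direction and record the boundary computation of $G$ that pins down the signs of $\alpha_1,\alpha_2$; the genuinely new content of the paper lies in the non-reversibility and the extra assumptions announced in the introduction.)
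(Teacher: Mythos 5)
Your sketch is essentially sound, and it is worth noting that it is a genuinely different route from the paper's: the paper does not prove Theorem \ref{th1} at all, it simply cites Theorem 1 of El Ktaibi--Bentoumi--Mesfioui; your argument is in effect the mechanism the paper itself only deploys later, in the $\Longleftarrow$ half of Theorem \ref{th5} (compare $D_\theta$ with $uv$ inside the bracket, reduce \eqref{ineq} to $1-\theta G\ge 0$, then use $\alpha_1\le G\le\alpha_2$ according to the sign of $\theta$). So a self-contained proof along your lines is a legitimate alternative to the citation.

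The one place where you stall — the $\theta\le 0$ case — is a self-inflicted problem: the line you wrote there was already correct and complete. The factor $(f-uf')(g-vg')$ appears identically on both sides of the comparison $\theta\bigl[(f-uf')(g-vg')-2D_\theta f'g'\bigr]\le\theta G$, so its sign is irrelevant (and in any case it is nonnegative, since $f-uf'\ge 0$ and $g-vg'\ge 0$ under A1--A2). All that is needed, for either sign of $\theta$, is $\theta\,(D_\theta-uv)\,f'g'\ge 0$, and since $D_\theta-uv=uv\,\theta fg/(1-\theta fg)$ one has $\theta(D_\theta-uv)=\theta^2\,uv\,fg/(1-\theta fg)\ge 0$ whenever $1-\theta fg>0$; the vaguer alternatives you propose (termwise splitting, $\le\max\{\theta G,0\}$) should be deleted. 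What you should instead make explicit is the positivity of the denominator and the range condition $0\le D_\theta\le 1$ (the paper's Definition 2.1 requires $C:S\to I$, and the paper checks this explicitly in Theorem \ref{th5}): after the normalization of Remark \ref{rem2}, $G(0,0)=1$ gives $\alpha_2\ge 1$, hence $\theta\le 1/\alpha_2\le 1$, and then A3 yields $1-\theta fg\ge 1-fg\ge uv\ge 0$ for $\theta\in[0,1]$, while $1-\theta fg\ge 1$ for $\theta<0$; this settles both the well-definedness and the bounds $D_\theta\ge uv$, respectively $D_\theta\le uv$, that your comparison uses. Finally, the closing sentence about "verifying that equality can be achieved" is not needed for the $\Longleftarrow$ implication and is at odds with the paper's main point that the converse fails under A1--A3; drop it. Also note that you only need the implication $\theta\in[1/\alpha_1,1/\alpha_2]\Rightarrow 1-\theta G\ge 0$, not an equivalence, which avoids fussing over degenerate sign cases of $\alpha_1$.
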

\begin{proof}
See Theorem 1 in \cite{KBMS}.
\end{proof}


\section{The restriction on the concavity}	                                    


Assumptions A1, A2, A3  alone cannot guarantee the converse of Theorem \ref{th1} as shown in the following example:

\begin{ex}
Let $f(u) = (1-u)^3$ and $g(v) = (1-v)^3$. The minimum and maximum of $G$ are reached on the diagonal $v=u$ where $G$ has the expression
$$
G(u,u) = (f(u)-uf'(u))^2 - 2u^2 (f'(u))^2 = -\left(-1+u \right)^{4} \left(14 u^{2}-4 u -1\right)
$$
so, according to \eqref{eq_alpha}, we have
\begin{eqnarray*}
\alpha_1 &=& \underset{I}{\min} (G(u,u)) = G \left( \frac47,\frac47 \right) = - \frac{729}{16807}\  \arr\  \frac{1}{\alpha_1} \approx -23.0549\\
\alpha_2 &=& G(0,0) = 1.
\end{eqnarray*}

If Theorem \ref{th1} were reversible, $\theta$ would be constrained to $[-23.0549, 1]$. However, as shown in Figure \ref{fig1}, inequality \eqref{ineq} holds even for $\theta = -30$, indicating that the lower bound is overly restrictive. As a matter of fact, the minimum $\theta$ verifying \eqref{ineq} is $\theta_{\min} \approx -36.1903$, significantly smaller than  $1/\alpha_1$.
\end{ex}

\begin{figure}[htbp]
\begin{center}
\includegraphics[scale=0.32]{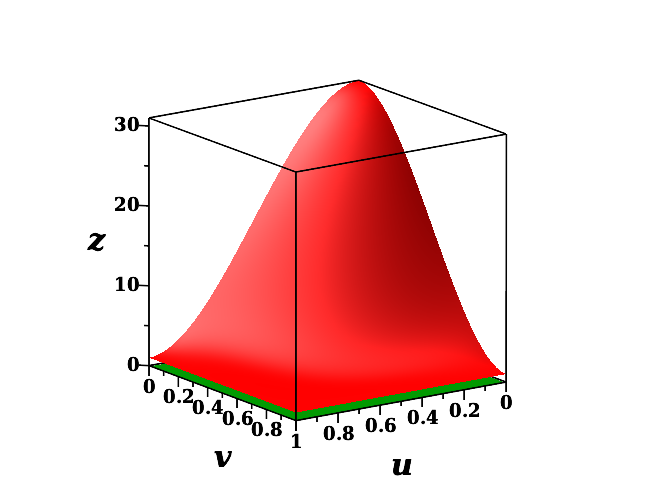}
\includegraphics[scale=0.32]{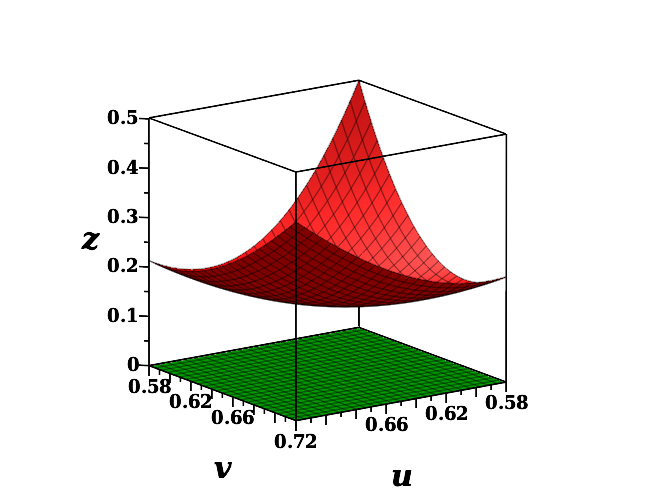}
\caption{Inequality \eqref{ineq} with $\theta=-30$ on $S$ (left) and zoomed around the minimum point (right).}
\label{fig1}
\end{center}
\end{figure}

In order to avoid these situations we restrict our analysis to the case where $f$ and $g$ are concave down. Therefore we will use the following assumptions:

\begin{itemize}

\item[B1.]  $f(0)=g(0)=1\, , \ f(1) = g(1) = 0$.

\item[B2.] $f$ and $g$ are strictly decreasing functions.

\item[B3.]  $f$ and $g$ are concave functions.

\end{itemize}

We introduce now a family of continuous functions $h_M,\ M\geq 1$:
\begin{equation}\label{eq_h}
h_M(u) =
\begin{cases} 
1 & \spa 0 \leq u \leq 1 - \tabfrac{1}{M}, \\
M \cdot (1-u) & \spa  1 - \tabfrac{1}{M} < u \leq 1\, .
\end{cases}
\end{equation}

\begin{rem}\label{rem1}
If $f$ and $g$ verify B1, B2, B3 we have $fg \leq \alpha_2 (1-u v)$.
\end{rem}

\begin{proof}
We assume $M=-f'(1),\ N=-g'(1)$ and, without loss of generality, $\alpha_2=M \geq N$. Because
$$
f \leq h_M \spa \text{and} \spa g \leq h_N
$$
it is enough to prove the result with $f=h_M$ and $g=h_N$.
\begin{itemize}

\item $f=g=1\,.$ 
\begin{eqnarray*}
1 &\leq& -\frac{1}{M} +2\\
1 &\leq& M \left[ 1 - \left( 1-\frac{1}{M} \right)^2 \right]\\
1 &\leq& M \left[ 1 - \left( 1-\frac{1}{M} \right) \left( 1-\frac{1}{N} \right) \right]\\
f g &\leq& M ( 1 - uv )
\end{eqnarray*}

\item $f=1,\ g=N \cdot (1-v)\,.$
\begin{eqnarray*}
N(1-v) &\leq& M(1-v)\\ 
N(1-v) &\leq& M\left[ 1 - \left( 1-\frac{1}{M} \right) v \right]\\ 
N(1-v) &\leq& M ( 1 - u v )\\ 
f g &\leq& M ( 1 - uv )
\end{eqnarray*}

\item $f=M \cdot (1-u),\ g=N \cdot (1-v)\,.$
\begin{eqnarray*}
1 &\leq& \frac{1-uv}{1-u}\\
\frac{1}{1-v} &\leq& \frac{1-uv}{(1-u)(1-v)}\\
N  &\leq& \frac{1-uv}{(1-u)(1-v)}\\
M N(1-u)(1-v) &\leq& M (1-uv)\\
f g &\leq& M ( 1 - uv )
\end{eqnarray*}
where the third inequality follows from $N \leq \dfrac{1}{1-v}$ (see the definition of $h_N$).

\end{itemize}
\end{proof}


\section{A preliminary result}	                                    


The possibility to reverse Theorem \ref{th1} is related to the position of the maximum of $G$ in $S$.

\begin{theo}\label{th2}
Let $D_\theta$ be the copula \eqref{eq_D}. Assume that $f$ and $g$ verify B1, B2, B3 and $
\alpha_2 =  \underset{\partial S}{\max} (G)$. Then
$$
D_\theta\ \text{is a valid copula}\spa  \arr \spa  \theta \in \left[ \frac{1}{\alpha_1}, \frac{1}{\alpha_2}\right]\, .
$$
\end{theo}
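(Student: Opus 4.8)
The plan is to establish the equivalence by comparing the bracket in \eqref{ineq} with the function $G$ of \eqref{eq_G} and by reducing everything to the boundary $\partial S$. Write $H = (f-uf')(g-vg') - 2D_\theta f'g'$ for that bracket, so that, by \eqref{ineq}, validity of $D_\theta$ means $1 - \theta H \geq 0$ on $S$. A one-line computation gives $H - G = 2f'g'(uv - D_\theta)$, and since $f,g$ are decreasing (B2) we have $f'g' \geq 0$, while $uv - D_\theta = uv\,\dfrac{-\theta fg}{1-\theta fg}$ has the sign of $-\theta$ (here $0 \le fg \le f(0)g(0) = 1$ and $1-\theta fg>0$). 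Hence $H \leq G$ when $\theta \geq 0$ and $H \geq G$ when $\theta \leq 0$. Moreover, on $\partial S$ one has $uv\cdot fg = 0$ at every point (on $u=1$ and $v=1$ because $f(1)=g(1)=0$, on $u=0$ and $v=0$ because $uv=0$), so there $D_\theta = uv$ and $H = G$.

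For the implication $\theta \in [1/\alpha_1,1/\alpha_2] \Rightarrow D_\theta$ valid I would argue directly, without invoking Theorem \ref{th1} (whose hypothesis A3 is not available here, only the weaker bound $fg \le \alpha_2(1-uv)$ of Remark \ref{rem1}). If $\theta \ge 0$ then $H \le G \le \alpha_2$, so $1 - \theta H \ge 1 - \theta\alpha_2 \ge 0$ because $\theta \le 1/\alpha_2$ and $\alpha_2 \ge G(0,0) = 1 > 0$. If $\theta \le 0$ then $H \ge G \ge \alpha_1$, so $1 - \theta H \ge 1 - \theta\alpha_1 \ge 0$ because $\theta \ge 1/\alpha_1$ and $\alpha_1 < 0$. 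Together with the elementary check that $1 - \theta fg > 0$ on $S$ (which follows from $fg \le 1$ and $\theta \le 1/\alpha_2 \le 1$, with equality only possible at $(0,0)$, where the numerator $uv$ vanishes) this gives validity; in fact this half uses only B1 and B2.

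For the converse, $D_\theta$ valid $\Rightarrow \theta \in [1/\alpha_1,1/\alpha_2]$, I would restrict $1 - \theta H \ge 0$ to $\partial S$, where $H = G$, so that validity forces $1 - \theta G \ge 0$ at every boundary point. Evaluating at a point where $G$ reaches $\max_{\partial S} G = \alpha_2$ (the hypothesis of the theorem) and using $\alpha_2 > 0$ gives $\theta \le 1/\alpha_2$. For the matching lower bound I would show that the global minimum $\alpha_1 = \min_S G$ is itself attained on $\partial S$ — concretely at the corner $(1,1)$, where $G(1,1) = -f'(1)g'(1) < 0$. Granting this, $\alpha_1 = \min_{\partial S} G$, and the boundary inequality at $(1,1)$ reads $1 + \theta f'(1)g'(1) \ge 0$, i.e. $\theta \ge 1/\alpha_1$.

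\emph{The crux}, and the step I expect to be hardest, is exactly this last claim: that under concavity (B3) the minimum of $G$ cannot escape into the interior of $S$. Writing $c = -f' \ge 0$, $d = -g' \ge 0$ and $a = f-uf'$, $b = g-vg'$ so that $G = ab - 2uvcd$, one finds $\partial_u G = -uf''\,(g+vg') - 2v f'g'$ together with the symmetric expression for $\partial_v G$; the plan is to exploit $f'' \le 0$ and $f'g' \ge 0$ to show these cannot vanish simultaneously at an interior local minimum, so that the minimum lies on $\partial S$ and an edgewise computation (as after \eqref{eq_M1M2}) places it at $(1,1)$. This is precisely the step that collapses for convex generators: for $f(u)=g(u)=(1-u)^3$ the minimum of $G$ moves onto the interior diagonal, the corner no longer controls the lower bound, and \eqref{ineq} survives for $\theta$ far below $1/\alpha_1$. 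This is why B3 is indispensable, whereas the location of the maximum of $G$ is not forced by concavity and must be imposed separately through the hypothesis $\alpha_2 = \max_{\partial S} G$.
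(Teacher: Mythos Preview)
Your framework is sound, and the identity $H-G=2f'g'(uv-D_\theta)$ is a clean way to see why restricting \eqref{ineq} to $\partial S$ collapses it to $1-\theta G\ge 0$. But there is a genuine gap exactly where you flag it: you never prove that $\alpha_1=\min_S G$ is attained at $(1,1)$. Your critical-point plan is only sketched, and the sign analysis for $\partial_u G=-uf''(g+vg')-2vf'g'$ does not obviously close, since $g+vg'$ changes sign on $(0,1)$ (it equals $1$ at $v=0$ and $g'(1)<0$ at $v=1$), so the two terms need not have a common sign at an interior critical point.

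The paper dispatches this step by a direct two-line inequality rather than by excluding interior minima. Expand
\[
G=(f-uf')(g-vg')-2uvf'g'=fg-vfg'-ugf'-uvf'g';
\]
the first three summands are nonnegative by B2, so $G\ge -uvf'g'$. Concavity (B3) now gives $-f'(u)\le -f'(1)=a$ and $-g'(v)\le -g'(1)=b$, hence $uv(-f')(-g')\le ab$ and therefore $G\ge -ab$ on all of $S$, with equality at $(1,1)$. Thus $\alpha_1=-ab$ globally, and plugging $(u,v)=(1,1)$ into \eqref{ineq} (where indeed $H=G$) yields $1+\theta ab\ge 0$, i.e.\ $\theta\ge 1/\alpha_1$. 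This replaces your interior-critical-point argument entirely and is precisely where B3 does its work.

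Two smaller remarks. The theorem as stated is only the implication ``$D_\theta$ valid $\Rightarrow\theta\in[1/\alpha_1,1/\alpha_2]$''; your $\Leftarrow$ paragraph is surplus here (that direction is only taken up later, in Theorem~\ref{th5}). And for the upper bound the paper does not even pass through the abstraction $H=G$ on $\partial S$: it simply sets $v=0$ (resp.\ $u=0$) in \eqref{ineq}, evaluates at $u=1$ (resp.\ $v=1$) to get $\theta\le 1/a$ and $\theta\le 1/b$, and then uses B3 (so that $\max(f-uf')=a$, $\max(g-vg')=b$) together with \eqref{eq_M1M2} and the hypothesis $\alpha_2=\max_{\partial S}G$ to identify $\max\{a,b\}$ with $\alpha_2$.
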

\begin{proof}
The maximum of $G$ on $\partial S$, if we  call $a=-f'(1)$ and $b=-g'(1)$, is now 
\begin{equation}\label{eq_al2}
\alpha_2 = \max\{ a,b \}
\end{equation}
because of \eqref{eq_M1M2}.

$D_\theta$ being a valid copula, $\theta$ has to verify \eqref{ineq} for all $(u,v)$ in $S$. In particular:
\begin{eqnarray*}
v=0  &\arr \spa 1 - \theta (f-uf') \geq 0  &\arr \spa \theta \leq \frac1a \\
u=0  &\arr \spa 1 - \theta (g-v g') \geq 0  &\arr  \spa \theta \leq \frac1b
\end{eqnarray*}
so finally $\theta \leq 1 / \max\{ a,b \} =  1 / \alpha_2$ because of \eqref{eq_al2}.
\vspa

The proof for $1/\alpha_1$ is very similar:
$$
G(u,0) = (f(u)-uf'(u))\ \nearrow \spa \arr \spa \underset{I}{\min} (G(u,0)) = 1
$$
and
$$
G(u,1) = b (f(u) + uf'(u)) \ \searrow \spa \arr \spa \underset{I}{\min} (G(u,1)) = -ab
$$
so finally $\min(G)\big|_{\substack{\partial S}} = -ab.$ Now
\begin{equation}\label{eq_al1}
G = fg - vfg' - uf' g -uvf'g' \geq -uvf'g' \geq -ab \spa \arr \spa \alpha_1 = -ab\, .
\end{equation}

Replacing now $u=v=1$ in \eqref{ineq} we get
$$
1 - \theta (-ab) \geq 0 \spa \arr \spa \theta \geq \frac{1}{-ab} = \frac{1}{\alpha_1}
$$
because of \eqref{eq_al1}.
\end{proof}

So the problem is now to identify those functions $f$ and $g$ that generate the function $G$ in \eqref{eq_G} with the property that the maximum is reached on the boundary of $S$. 
%
%
%
%
%
%
%
%
%
%


\section{The restriction on the derivatives}	                                    


In this section, we will impose another constraint on $f'$ and $g'$ to ensure that the maximum of $G$ is on the boundary of $S$ as it is in Theorem \ref{th2}. If $1 \leq \max \{ a, b \} < 2$, then we can construct functions $f$ and $g$ for which the maximum of $G$ is not on the boundary of $S$ and the following is one such example.
 To start, consider the continuous functions \eqref{eq_h} with $M \geq 1$.


For such functions we can observe that we have, in the right neighborhood of $u= 1 - 1/M$:
\begin{equation}\label{eq_M}
u \approx 1-\frac{1}{M}   \spaaa h(u) \approx 1  \spaa -h'(u) = M\, .
\end{equation}

Restricting $G$ to the diagonal of $S$,  and replacing these values of $h$ for both $f$ and $g$, we find:
$$
G(u,u) = (h(u)-uh'(u))^2 - 2u^2 (h'(u))^2 \approx -M^{2}-2 +4M > M =  \underset{\partial S}{\max} (G)
$$
if $1<M<2$ (see Figure \ref{fig2}).

\begin{figure}[htbp]
\begin{center}
\includegraphics[scale=0.4]{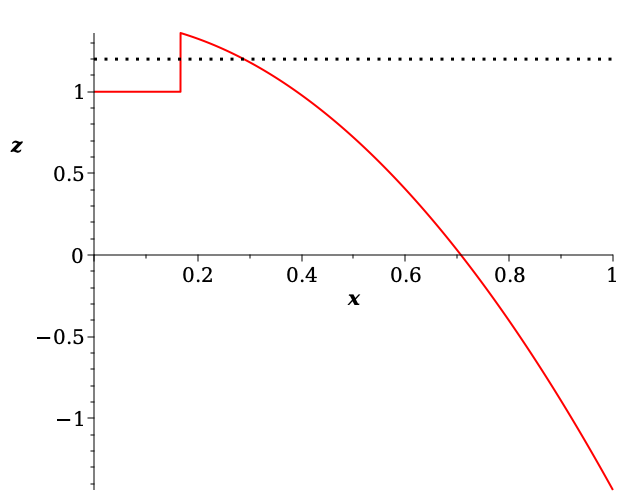}
\caption{The function $z=G(u,u)$ with $f=h_{1.2}$ and z=1.2 (dotted).}
\label{fig2}
\end{center}
\end{figure}

The problem with this (symmetric) example is that $f$ and $-f'$ can simultaneously have values that are very close to their maximum possible values, that are $f \approx 1,\ -f' \approx a$. If we want to rule out such cases we have to introduce an additional bound on the values of the function and its derivative:

%
%

\begin{center}

B4.  $H = f \cdot (1-v g') + g \cdot (1-u f') \leq \max\{ a,b \}+1$.

\end{center}

 Before moving further it could be useful to check if this new condition is reasonable, testing some simple examples where the maximum of $G$ is clearly reached on the boundary of $S$. We use as a benchmark the examples provided in Table 1 in \cite{KBM}.

The results in Table \ref{table} (see Appendix) should convince the reader that B4 is a reasonable choice if we want to get rid of functions like \eqref{eq_h}, while preserving a great variety of well known functions.

\begin{ex}\label{ex2}
 We examine condition B4 on one example:
$$
f(x) = 1-u^n\spaa g(v) = 1-v^m \spaa 1 \leq m \leq n 
$$ 
we have $a = -f'(1) = n,\ b=-g'(1)=m$ and so $\max\{ a,b \}=n$. 
It can be easily checked that there is only one critical point $(u_0,v_0)$ of $H$ in the interior of $S:\ (u_0,v_0) = \left( \left(\dfrac{m-1}{n+m}\right)^{1/n}, \left(\dfrac{n-1}{n+m}\right)^{1/m} \right)$.
A simple calculation gives
$$
H(u_0,v_0) = 2 + \frac{(n-1)(m-1)}{n+m} \leq 2+(n-1) = n+1\, .
$$
On the boundary of $S$ the maximum of $H$ is $H(1,0) = n+1$.
So finally condition B5 reduces to
$$
\underset{S}{\max} (H) = n +1 \leq \max\{ a,b \} +1 = n+1.
$$
\end{ex}

Now, we state the main result of this paper.


\section{The inverse of the theorem}	                                    


We are finally ready to prove the following

\begin{theo}\label{th5}
Let 
$$
D_\theta(u,v) =  \frac{uv}{1 - \theta f(u) g(v)}\,, \spaa 0 \leq u,\, v \leq 1 \, ,\,  \theta \in \R
$$
where $f$ and $g$ verify the following conditions:
\begin{itemize}

\item[B1.]  $f(0)=g(0)=1\, , \ f(1) = g(1) = 0$.

\item[B2.] $f$ and $g$ are strictly decreasing functions.

\item[B3.]  $f$ and $g$ are concave down.

\item[B4.]  $f\cdot (1-v g') + g \cdot (1-u f') \leq \max\{ a,b \}+1$.

\end{itemize}
Then:
\begin{center}
$D_\theta$ is a valid copula\spaa $\Longleftrightarrow  \spaa \theta \in \left[ \dfrac{1}{\al_1}, \dfrac{1}{\al_2} \right]$
\end{center}
\end{theo}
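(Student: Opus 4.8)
The backward implication ($\Longleftarrow$) is exactly Theorem \ref{th1}, since B1--B3 imply A1--A3 (strict decrease plus concavity plus the endpoint conditions force $fg \le 1-uv$, which is essentially Remark \ref{rem1} with $\alpha_2$ replaced by its upper bound, or one argues directly). So the entire content is the forward implication: if $D_\theta$ is a valid copula then $\theta \in [1/\alpha_1, 1/\alpha_2]$. By Theorem \ref{th2}, it suffices to show that under B1--B4 the maximum of $G$ over $S$ is attained on the boundary $\partial S$, i.e. $\alpha_2 = \max_{\partial S}(G) = \max\{a,b\}$ where $a=-f'(1)$, $b=-g'(1)$. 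Once that is established, Theorem \ref{th2} gives $\theta \le 1/\alpha_2$, and the companion computation in Theorem \ref{th2} (equation \eqref{eq_al1}) already gives $\alpha_1 = -ab$ and hence $\theta \ge 1/\alpha_1$ from plugging $u=v=1$ into \eqref{ineq}; combining yields the claimed interval.

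\textbf{The core lemma.} The plan is therefore to prove: under B1--B4, $G(u,v) \le \max\{a,b\}$ for all $(u,v)\in S$. Rewrite $G$ using \eqref{eq_G}. The key algebraic observation is the identity relating $G$ to $H = f(1-vg') + g(1-uf')$. Expanding,
\begin{align*}
G &= (f-uf')(g-vg') - 2uvf'g' \\
&= fg - vfg' - uf'g + uvf'g' - 2uvf'g' \\
&= fg - vfg' - uf'g - uvf'g'.
\end{align*}
On the other hand $H = f - vfg' + g - uf'g$, so $H - G = f + g - fg + uvf'g'$. Since $f,g \in [0,1]$ on $S$ we have $f+g-fg = 1-(1-f)(1-g) \ge 0$, and since $f'\le 0$, $g'\le 0$ we have $uvf'g' \ge 0$; hence $H - G \ge 0$, i.e. $G \le H$. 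Combining with B4 gives $G \le H \le \max\{a,b\}+1$. This is off by one from what we need, so a more careful argument is required where $(1-f)(1-g)$ and $uvf'g'$ together contribute at least $1$ — but they need not, near the corner $u=v=1$. The fix is to split the analysis: away from the corner one uses $G \le H - [(1-f)(1-g)+uvf'g'] $ together with the fact that $H$ is strictly below $\max\{a,b\}+1$ in the interior by B4; near the corner one uses the concavity bounds $f \le a(1-u)$... no, concavity with $f(1)=0$, $f'(1)=-a$ gives $f \ge a(1-u)$ near $u=1$, while globally $f \le 1$; more usefully, concavity gives $-uf' \le f$ is false in general — rather, the tangent-line bound $f(u) \le f(u_0) + f'(u_0)(u-u_0)$ and $f(1)=0$ yield $f(u) \ge -f'(1)(u-1) = a(1-u)$, and monotonicity of $-f'$ (concavity) gives $-f'(u) \le -f'(0^+)$.

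\textbf{Key steps, in order.} (1) Reduce, via Theorem \ref{th2}, to showing $\max_S G = \max_{\partial S} G = \max\{a,b\}$. (2) Establish the pointwise identity $H - G = (1-f)(1-g) + uvf'g' - (1-f-g+fg) + \dots$ carefully — recompute to get $H - G = f+g-fg+uvf'g'$, hence $G = H - (f+g-fg) - uvf'g'$. (3) Use B4 to get $G \le \max\{a,b\}+1 - (f+g-fg) - uvf'g'$, and show the subtracted quantity $(f+g-fg)+uvf'g'$ is $\ge 1$ wherever $G$ could exceed $\max\{a,b\}$. For this, note $f+g-fg \ge \max\{f,g\}$ (since $f+g-fg - f = g(1-f)\ge 0$), so it suffices that $\max\{f,g\} + uvf'g' \ge 1$ at the candidate maximizer — and where $\max\{f,g\}<1$, i.e. in the region $u>1-1/a$ and $v>1-1/b$ roughly, one brings in the concavity lower bounds $f \ge a(1-u)$, $g\ge b(1-v)$ and $-f' \ge$ (the secant slope) to control $uvf'g'$ from below. (4) Handle the boundary directly via \eqref{eq_M1M2}, giving $\max_{\partial S}G = \max\{a,b\}$, and conclude $\alpha_2 = \max\{a,b\}$; then invoke Theorem \ref{th2} and \eqref{eq_al1}, \eqref{ineq} with $u=v=1$ to pin down both endpoints.

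\textbf{Main obstacle.} The delicate point is step (3): showing that wherever $f$ and $g$ are both small (the corner region near $(1,1)$, which is exactly where B4 alone does not already force $G$ small), the product term $uvf'g'$ compensates. This is where concavity (B3) must be used in an essential way — it provides the two-sided control $a(1-u) \le f(u) \le 1$ and monotone derivative — and where one must check that the ``$+1$'' slack in B4 is consumed by $(f+g-fg)+uvf'g' \ge 1$. I expect this corner estimate, balancing the smallness of $f,g$ against the largeness of $|f'|,|g'|$ forced by $f,g$ reaching $0$ at $u,v=1$ with finite slopes $-a,-b$, to be the crux of the argument; everything else (the identity, the boundary computation, the assembly) is routine given the earlier theorems.
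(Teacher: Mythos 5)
Your skeleton is right — reduce the forward implication via Theorem \ref{th2} to showing $\max_S G=\max_{\partial S}G=\max\{a,b\}$, and exploit the identity $G=H-\bigl[(f+g-fg)+uvf'g'\bigr]$ together with B4 — but the argument has two genuine gaps, one of which concerns precisely the step you leave open. The crux inequality you need, $(f+g-fg)+uvf'g'\geq 1$, i.e. $uvf'g'\geq(1-f)(1-g)$, does \emph{not} require any case split and does \emph{not} fail near the corner $(1,1)$ as you suggest: it holds everywhere on $S$ as a one-line consequence of concavity. Indeed $(f-uf')'=-uf''\geq 0$ by B3, so $f-uf'$ is nondecreasing and its minimum is $f(0)=1$; hence $-uf'\geq 1-f\geq 0$, and likewise $-vg'\geq 1-g\geq 0$. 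Multiplying these two nonnegative inequalities gives $uvf'g'\geq(1-f)(1-g)$ on all of $S$, which upgrades your ``off by one'' bound $G\leq H$ to $G\leq H-1\leq\max\{a,b\}$ by B4. This is exactly the paper's argument (its inequality \eqref{eq*}); your proposed repair via $f+g-fg\geq\max\{f,g\}$, tangent bounds $f\geq a(1-u)$, and a near-corner/away-from-corner dichotomy is both unnecessary and unfinished — in particular ``$H$ is strictly below $\max\{a,b\}+1$ in the interior'' is not granted by B4, which is only a non-strict bound.

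The second gap is in the backward implication. Your claim that B1--B3 imply A3, i.e. $fg\leq 1-uv$, is false: take $f(u)=g(u)=1-u^3$ (which satisfies B1--B3); at $u=v=\tfrac12$ one gets $fg=\tfrac{49}{64}>\tfrac34=1-uv$. (The paper's Table \ref{table} records exactly this failure of A3 for $1-u^n$, $n>2$, and the closing remark stresses that A3 is being \emph{dropped}, not derived.) So you cannot simply invoke Theorem \ref{th1}. The correct route, as in the paper, is to re-prove validity directly: Remark \ref{rem1} gives only $fg\leq\alpha_2(1-uv)$, which combined with $\theta\leq 1/\alpha_2$ yields $\theta fg\leq 1-uv$ and hence $0\leq D_\theta\leq 1$, and then \eqref{ineq} follows from $1-\theta\bigl[(f-uf')(g-vg')-2D_\theta f'g'\bigr]\geq 1-\theta G\geq\min\{1-\theta\alpha_1,\,1-\theta\alpha_2\}\geq 0$. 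The remaining assembly of the forward direction (boundary computation \eqref{eq_M1M2}, $\alpha_1=-ab$, plugging $u=v=1$ into \eqref{ineq}) is as you describe.
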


\begin{proof}

\begin{itemize}

\item[$\Longleftarrow)$] We have $\al_2 \geq1$ and so $\theta \leq \dfrac1{\al_2} \leq1$. We start our proof checking that $0 \leq D_\theta \leq1$.
$$
\theta fg \leq fg \spa \Longrightarrow  \spa 1 - \theta fg \geq 1 -  fg \geq 0  \spa \Longrightarrow  \spa D_\theta \geq 0\, .
$$
\begin{eqnarray*}
\text{Remark \ref{rem1}}  \spa &\Longrightarrow&  \spa fg \leq \alpha_2 (1-uv) \spa \Longrightarrow  \spa \frac{fg}{\alpha_2} \leq 1-uv  \spa \Longrightarrow  \spa \theta fg \leq 1-uv\\
&\Longrightarrow&  \spa 1 - \theta fg \geq uv  \spa \Longrightarrow  \spa D_\theta \leq 1\, .
\end{eqnarray*}

The last thing we have to prove is \eqref{ineq}. If $\theta >0$ we have
$$
1- \theta \left[ (f-uf')(g-vg') -2 D_\theta f'g' \right] \geq 1 - \theta G \geq 1 - \theta \alpha_2 \geq 0 
$$
and similarly, if $\theta <0$,
$$
1- \theta \left[ (f-uf')(g-vg') -2 D_\theta f'g' \right] \geq 1 - \theta G \geq 1 - \theta \alpha_1 \geq 0\,.
$$
\vspace{1em}

\item[$\Longrightarrow)$] According to Theorem \ref{th2} we only have to prove that $\alpha_2 =  \underset{\partial S}{\max} (G) = \max\{ a,b \}$. If we remember that $f-uf'$ and $g-vg'$ are increasing functions we immediately realize that their minimum is 1 and so
$$
uf' \leq f-1 \leq 0 \spaaa vg' \leq g-1 \leq 0 \,.
$$

Multiplying the previous  inequalities, we get $uvf'g' \geq (f-1)(g-1)$  that implies
\begin{equation}\label{eq*}
-u v f' g' \leq -fg +f+g-1\, .
\end{equation}

Now 
\begin{eqnarray*}
G &=& (f-uf') (g-vg') -2uv f' g'\\
&\overset{\eqref{eq*}}{\leq}& fg -v f g' -ugf' -fg +f+g-1  \\
&=& \left[ f(1-vg') + g(1-uf') \right] - 1\\
&\overset{(B4)}{\leq}& [\max\{ a,b \} +1] -1 = \max\{ a,b \}\, .
\end{eqnarray*}

\end{itemize}

\end{proof}

\begin{rem}
\begin{itemize}

\item In Example \ref{ex2} we showed that condition B4 reduced to $\max(H) = \max \{a,b\}+1$. This is not a specific feature of this particular example. It is always the case. A simple calculation shows that, if B4 is verified, then:
$$
H(1,0) = a+1\, , \spa H(0,1) = b+1  \spa \arr \spa \max(H) = \max \{ a,b \}+1.
$$

\item Condition A3 is quite restrictive. Note that, at least in the case where $f$ and $g$ are concave functions, it can be omitted (see Theorem \ref{th5}).

\end{itemize}
\end{rem}

\begin{landscape}

\section*{Appendix}

\begin{table}[h!]
\begin{center}
\large 
\begin{tabular}{llcccccc} 
\specialrule{1.5pt}{0pt}{0pt} 
$\boldsymbol{f(u)}$ & $\boldsymbol{g(v)}$ & {\bf Conditions} & {\bf B1} & {\bf B2} & {\bf A3} & {\bf B3} &  {\bf B4}\\
\hline
$1-u^n$ & $1-v^n$ & $1 \leq n \leq 2$ &  T & T& T & T&  T\\
	    &                & $2 < n $          & T &T & {\bf F} &   T & T\\
&&&&&\\
$\log_b(u+b(1-u))$ & $\log_b(v+b(1-v))$ & $1 <b$  & T &T  &T & T& T \\
&&&&&\\
$\cos(\pi u/2)$ & $\cos(\pi v/2)$ &  &T & T  & T & T & T\\
&&&&&\\
$1-u$ & $\log_b(v+b(1-v))$ &  $1 < b \leq 41 $ & T &T &T&T &T \\
 &  &  $41 < b  $ & T&T & {\bf F} &T&T \\
&&&&&\\
$\cos(\pi u/2)$ & $1-v$ &  &T & T & T &  T & T\\
&&&&&\\
$\log_b(v+b(1-v))$ & $\cos(\pi v/2)$ & $1 < b$ & T & T& T& T &T\\
&&&&&\\
$(1-u) e^{cu}$ & $(1-v) e^{cv}$ & $0 \leq c \leq 1$ & T & T & T & T & T\\
&&&&&\\
$\dfrac{{\mathrm e}^{a u}-{\mathrm e}^{a}}{1-{\mathrm e}^{a}}$ & $\dfrac{{\mathrm e}^{a v}-{\mathrm e}^{a}}{1-{\mathrm e}^{a}}$  & $0 < a \leq 3.7$ & T  & T & T & T & T\\
 &  & $3.7 < a $ & T & T & {\bf F} & T  & T\\
&&&&&\\
$h_M(u)$ & $h_M(v)$ & 1<M<2 &T & T & T&T &{\bf F}\\
\specialrule{1.5pt}{0pt}{0pt} 
\end{tabular}
\end{center}
\caption{Testing conditions B1,\dots,B4 (T=True, F=False). We added A3 to show how restrictive this condition is. }
\label{table1}
\label{table}
\end{table}

\end{landscape}

\newpage


\end{document}